\DeclarePairedDelimiter\floor{\lfloor}{\rfloor}
\journal{Journal of Mathematical Analysis and Applications}
\begin{document}
\newcommand{\R}{\mathbb{R}}

\newtheorem{Def}{Definition}[section]
\newtheorem{claim}{Claim}[section]
\newtheorem{lemma}{Lemma}[section]
\newtheorem{prop}{Proposition}[section]
\newtheorem{conj}{Conjecture}[section]
\newtheorem{theorem}{Theorem}[section]
\newtheorem{coro}{Corollary}[section]

\begin{frontmatter}

\title{A constructive proof of the Bollob\'as-Varopoulos theorem}

\author{Dylanger S. Pittman}
\address{400 Dowman Drive, Atlanta}

\ead{dpittm2@emory.edu}

\begin{abstract}
The Bollob\'as-Varopoulos theorem is an analogue of Hall's matching theorem on non-atomic measure spaces. Here we prove a finite version with a completely constructive proof. 
\end{abstract}

\begin{keyword}
combinatorics\sep measure theory \sep  discrete math \sep graph theory
\MSC[2010] 00-01\sep  99-00
\end{keyword}

\end{frontmatter}

\section{Introduction}
In 1972, Bella Bollob\'as and Nicholas Varopoulos \cite{Boll} proved an analogue to Hall's matching theorem using a short clever argument that involves characteristic function methods similar to those introduced in \cite{Vara}. Here, we reprove a finite version of the same theorem, with a fully constructive method. Here is our result.

\begin{theorem}\label{conthall}
Let $(\Omega,S,\nu)$ be a finite non-atomic measure and $A$ a measurable set in $S$ with $\nu (A)>0$ and subsets $A_1,A_2,\dots, A_n$ and let $m_1,m_2,\dots,m_n >0$.

Then there are disjoint subsets $B_k\subseteq A_k$ with $\nu(B_k)=m_k$ for all $k\in [n]$ if and only if 

\[\nu\left(\bigcup_{i\in I} A_i\right)\ge \sum_{i\in I} m_i\]

for all $I\subseteq[n].$
\end{theorem}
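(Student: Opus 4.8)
The forward (necessity) direction is immediate: if disjoint $B_k \subseteq A_k$ with $\nu(B_k)=m_k$ exist, then for any $I\subseteq[n]$ the sets $B_i$, $i\in I$, are disjoint subsets of $\bigcup_{i\in I}A_i$, so $\nu(\bigcup_{i\in I}A_i)\ge\sum_{i\in I}\nu(B_i)=\sum_{i\in I}m_i$. Thus the whole content is the converse, and the plan is to make it constructive by passing to a finite model of the problem, solving that model combinatorially, and lifting the solution back using the divisibility of non-atomic measures.

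First I would pass to the finite algebra generated by $A_1,\dots,A_n$. Its atoms are the (at most $2^n$) cells $C_T:=\bigcap_{i\in T}A_i\setminus\bigcup_{i\notin T}A_i$ for $T\subseteq[n]$; they are pairwise disjoint, each $A_i$ is the disjoint union of the $C_T$ with $i\in T$, and for every $S\subseteq[n]$ the union of the cells $C_T$ with $T\cap S\ne\emptyset$ is exactly $\bigcup_{i\in S}A_i$. Writing $\mu_T=\nu(C_T)$, I would then seek reals $x_{T,i}\ge 0$ (for $i\in T$) with $\sum_{i\in T}x_{T,i}\le\mu_T$ for every $T$ and $\sum_{T\ni i}x_{T,i}=m_i$ for every $i$: this is a finite transportation feasibility problem, and by the max-flow/min-cut theorem it is solvable precisely when $\sum_{T:\,T\cap S\ne\emptyset}\mu_T\ge\sum_{i\in S}m_i$ for every $S\subseteq[n]$, which by the cell identity above is exactly the hypothesis $\nu(\bigcup_{i\in S}A_i)\ge\sum_{i\in S}m_i$. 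Given a feasible $(x_{T,i})$, I would use non-atomicity of $\nu$ — applying the intermediate-value property of non-atomic measures finitely often to split $C_T$ into disjoint measurable pieces, one of measure $x_{T,i}$ for each $i\in T$, plus a remainder — and set $B_i=\bigcup_{T\ni i}(\text{the piece of }C_T\text{ assigned to }i)$. Then $B_i\subseteq A_i$, the $B_i$ are pairwise disjoint, and $\nu(B_i)=\sum_{T\ni i}x_{T,i}=m_i$, which would complete the argument.

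The hard part will be the transportation step, i.e. actually producing the $x_{T,i}$ constructively. Since the capacities $\mu_T$ and demands $m_i$ may be irrational, I would not simply black-box min-cut but either run a finite augmenting-path procedure for the associated flow network, always augmenting along a shortest augmenting path so that it terminates regardless of the real data, or else prove the Gale–Hoffman feasibility criterion for this particular bipartite transportation problem directly by an induction on $|S|$ mirroring the usual proof of Hall's theorem. The same obstacle reappears if one instead attacks Theorem~\ref{conthall} by a direct induction on $n$: when some proper nonempty $I$ satisfies the hypothesis with equality one solves $I$ by induction, deletes $\bigcup_{i\in I}A_i$ from the remaining $A_j$ (the hypothesis is easily seen to persist), and recurses; but when every proper nonempty $I$ has strict slack $d(I):=\nu(\bigcup_{i\in I}A_i)-\sum_{i\in I}m_i>0$ one must carve out a single $B_n\subseteq A_n$ with $\nu(B_n)=m_n$ that meets each $\bigcup_{j\in J}A_j$ ($J\subseteq[n-1]$) in measure at most $d(J)$ — which keeps the hypothesis valid for $(A_j\setminus B_n)_{j<n}$ — and finding such a balanced $B_n$ is again precisely the transportation feasibility question. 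I expect essentially all of the difficulty, and whatever cleverness the constructive proof requires, to be concentrated in that one step.
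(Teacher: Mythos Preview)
Your argument is correct, but it follows a genuinely different route from the paper. Both proofs begin with the same cell decomposition $C_T=\bigcap_{i\in T}A_i\setminus\bigcup_{i\notin T}A_i$, but where you solve the real-valued transportation system $\sum_{i\in T}x_{T,i}\le\mu_T$, $\sum_{T\ni i}x_{T,i}=m_i$ in one stroke (via max-flow/min-cut or a Hall-style induction) and then invoke non-atomicity once to carve each cell into pieces of the prescribed measures, the paper instead \emph{discretizes}: for each small $\xi>0$ it chops every cell into $\lfloor\nu(C_T)/\xi\rfloor$ blocks of measure exactly $\xi$, sets $d_{k,\xi}=\lfloor m_k/\xi\rfloor-2^{n+1}$, checks that the integer Hall condition holds for this system of equal-weight blocks, applies the integer generalization of Hall's theorem (their Corollary~2.3/2.4) to obtain disjoint $B_{k,\xi}$ of measure $\xi d_{k,\xi}$, and finally lets $\xi\to 0$ along $\xi_i=\xi/2^i$ to produce a nested sequence $B_{k,\xi_0}\subseteq B_{k,\xi_1}\subseteq\cdots$ whose union, after excising null sets, gives the desired $B_k$. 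Your approach is shorter and avoids the limiting argument and the null-set cleanup, at the cost of needing a constructive solution of a transportation problem with possibly irrational data (which, as you note, Edmonds--Karp handles since its augmentation count is bounded combinatorially); the paper's approach uses only the \emph{integer} Hall theorem as its combinatorial input, so the ``constructive'' content is more elementary, but it pays for this with the approximation-and-limit machinery.
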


\section{Background and lemmas}

\subsection{Notation and non-atomic measures}

We use the following standard notation for the rest of the write up: $2^Q$ is the power set of a set $Q$, $|Q|$ denotes the cardinality of a set $Q$, $[n]=\{1,2,\dots, n\}$, and $\mathbb{N}=\{1,2,\dots\}$ are the natural numbers.

It would be beneficial to use to restate some of the key theorems and definitions that we use. 

Let $(\Omega, S, \nu)$ be a $\sigma$-finite measure. Then an {\it atom} of $\nu$ is a set $A \in S$ with $\nu(A) > 0$ such that for all $C \in S$ with $C \subset A$, either $\nu(C) = 0$ or $\nu(C) = \nu(A)$. By $\sigma$-finiteness, we have $\nu(A) <  \infty$. $(\Omega, S, \nu)$ or $\nu$ is called {\it non-atomic} if it has no atoms. Equivalently, $(\Omega, S, \nu)$ or $\nu$ is called {\it non-atomic} if for any measurable set $A$ with $\nu (A)>0$ there exists a measurable subset $B$ of $A$ such that $\nu (A)>\nu (B)>0.$

The following corollary is a consequence of Proposition A.1 in \cite{RMR}. This is coincidentally the theorem proved first by  Wacław Sierpiński, see \cite{Ser}.

\begin{coro}\label{mcoro1}
Let $(\Omega, S, \nu)$ be a non-atomic finite measure with $\nu(\Omega) > 0$. Then if $A$ is a measurable set in $S$ with $\nu (A)>0$, then for any real number $c$ with $\nu (A)\geq c\geq 0$ there exists a measurable subset B of A such that $\nu(B) = c.$
\end{coro}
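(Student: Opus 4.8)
The plan is to read this as an \emph{intermediate value theorem} for the set function $C \mapsto \nu(C)$ ranging over measurable subsets $C \subseteq A$: I will show its range is the whole interval $[0,\nu(A)]$, with non-atomicity supplying the ``continuity'' that prevents any value from being skipped. The argument rests on two ingredients, a small-set lemma and a greedy exhaustion from below.

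First I would prove a \emph{small-set lemma}: for every measurable $E$ with $\nu(E)>0$ and every $\varepsilon>0$ there is a measurable $F\subseteq E$ with $0<\nu(F)<\varepsilon$. This is a halving argument. If $\nu(E)\ge\varepsilon$, the defining property of a non-atomic measure gives $E'\subseteq E$ with $0<\nu(E')<\nu(E)$; then $E\setminus E'$ also has positive measure, and one of $E',\,E\setminus E'$ has measure at most $\nu(E)/2$ while staying positive. Iterating $k$ times produces a positive-measure subset of measure at most $\nu(E)/2^{k}$, which falls below $\varepsilon$ once $k$ is large.

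Next I would construct $B$ as an increasing union. Set $B_0=\varnothing$. Given $B_n\subseteq A$ with $\nu(B_n)\le c$, write $r_n=c-\nu(B_n)$; if $r_n=0$ I stop. Otherwise $\nu(A\setminus B_n)\ge r_n>0$, so the family $\{E\subseteq A\setminus B_n:\nu(E)\le r_n\}$ has a supremum $t_n$ of measures that is strictly positive (positivity uses the small-set lemma), and I pick $E_n$ in this family with $\nu(E_n)>t_n/2$ and put $B_{n+1}=B_n\cup E_n$. By construction $\nu(B_{n+1})=\nu(B_n)+\nu(E_n)\le c$, so the chain stays inside $[0,c]$ and the numbers $\nu(B_n)$ increase to some limit $L\le c$. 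Setting $B=\bigcup_n B_n$ and using continuity from below gives $\nu(B)=L$.

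The crux is showing the greedy process does not stall, i.e.\ that $L=c$; this is the step I expect to be the main obstacle, since a careless greedy choice could converge to a limit strictly below $c$. I would argue by contradiction: if $r=c-L>0$, then $A\setminus B$ has measure at least $r>0$, so the small-set lemma yields a fixed $F\subseteq A\setminus B$ with $0<\nu(F)=\eta<r$. Since $F\subseteq A\setminus B_n$ and $\nu(F)<r\le r_n$ for every $n$, $F$ competes in each supremum, forcing $t_n\ge\eta$ and hence $\nu(E_n)>\eta/2$ for all $n$. But the $E_n$ are pairwise disjoint subsets of $B$, so $\sum_n\nu(E_n)=L-\nu(B_0)\le\nu(A)<\infty$, which forces $\nu(E_n)\to0$ and contradicts $\nu(E_n)>\eta/2$. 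Therefore $L=c$ and $\nu(B)=c$, completing the construction.
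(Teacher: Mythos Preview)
Your argument is correct: the small-set (halving) lemma together with the greedy exhaustion from below is the classical proof of Sierpi\'nski's intermediate-value theorem for non-atomic measures, and your verification that the greedy process cannot stall at $L<c$ is clean and complete.

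There is nothing to compare against in the paper itself: the paper does not supply a proof of this corollary but simply records it as a consequence of Proposition~A.1 in~\cite{RMR} and the original result of Sierpi\'nski~\cite{Ser}. So your contribution is a self-contained proof where the paper defers to the literature. One could alternatively set up a Zorn's-lemma argument or a transfinite exhaustion (closer to Sierpi\'nski's original), but your countable greedy version is the standard modern route and entirely adequate here.
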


We also have another corollary following from Proposition A.2 in \cite{RMR}.

\begin{coro}\label{mcoro2}
Let $(\Omega, S, \nu)$ be a finite non-atomic measure and $A$ a measurable set in $S$ with $\nu (A)>0$. Let $r_i$ for $i = 1,\dots, n$ be numbers with $r_i > 0$ and $\sum_{i=1}^n r_i = \nu(A).$ Then $A$ can be decomposed as a union of disjoint sets $R_i \in S$ with $\nu(R_i) = r_i$ for $i = 1, \dots, n$.
\end{coro}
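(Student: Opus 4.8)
The plan is to induct on $n$, peeling off one piece of the prescribed measure at a time and invoking Corollary \ref{mcoro1} at each stage. For the base case $n = 1$ the hypothesis forces $r_1 = \nu(A)$, so the trivial decomposition $R_1 = A$ already works. For the inductive step I assume the statement holds for every collection of $n-1$ positive numbers whose sum equals the measure of an arbitrary measurable set of positive measure.

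Given $r_1, \dots, r_n$ with $r_i > 0$ and $\sum_{i=1}^n r_i = \nu(A)$, I would first apply Corollary \ref{mcoro1} to $A$ with target value $c = r_n$. This is legitimate because $0 < r_n \le \nu(A)$, and it produces a measurable set $R_n \subseteq A$ with $\nu(R_n) = r_n$. Setting $A' = A \setminus R_n \in S$, finite additivity gives $\nu(A') = \nu(A) - r_n = \sum_{i=1}^{n-1} r_i$, which is strictly positive since $n-1 \ge 1$ and every $r_i > 0$. The remaining numbers $r_1, \dots, r_{n-1}$ are positive and sum to $\nu(A')$, so the inductive hypothesis applies to the set $A'$ inside the same space $(\Omega, S, \nu)$ and yields pairwise disjoint $R_1, \dots, R_{n-1} \subseteq A'$ with $\nu(R_i) = r_i$. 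Because each $R_i$ with $i < n$ lies in $A'$ and is therefore disjoint from $R_n$, the sets $R_1, \dots, R_n$ are pairwise disjoint, carry the correct measures, and have union $R_1 \cup \dots \cup R_n = (A \setminus R_n) \cup R_n = A$, completing the induction.

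This argument is essentially routine once Corollary \ref{mcoro1} is in hand; since the whole construction takes place within the single fixed space $(\Omega, S, \nu)$, no fresh verification of non-atomicity for $A'$ is required. The only points demanding attention are confirming that the running remainder $A'$ always has positive measure, so that Corollary \ref{mcoro1} can be reapplied throughout the induction, and checking that disjointness is preserved when the complement $A \setminus R_n$ is subdivided. Both follow immediately from the positivity of the $r_i$ and from the fact that the pieces $R_1, \dots, R_{n-1}$ are carved out of $A'$ rather than out of all of $A$, so I do not expect any genuine obstacle.
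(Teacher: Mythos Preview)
Your induction argument is correct; the only step that needs care---that $A'$ retains positive measure and that the inductive hypothesis returns sets whose union is exactly $A'$---is handled. The paper itself does not spell out a proof of this corollary: it simply records it as a consequence of Proposition~A.2 in \cite{RMR}, so there is no detailed argument to compare against. Your route, deducing the decomposition directly from Corollary~\ref{mcoro1} by peeling off one piece at a time, is the standard self-contained way to obtain the result and is entirely in the spirit of the paper's constructive aims.
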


\subsection{Generalizations of Hall's matching theorem}

In addition to Corollaries \ref{mcoro1} and \ref{mcoro2}, we also need to use Hall's matching theorem, see (\cite{Hall}, theorem 2.1.2 in \cite{tbook}). Hall's theorem itself will not be covered in this write-up as there is a lot of literature already in existence, not to mention it would require me writing a few pages of elementary graph theory to cover the requisite background to make the statement of the theorem understandable.

The following corollary is a generalization of Hall's matching theorem. It is in fact Exercise 2.9 on p.54 in \cite{tbook} and it is proven with a clever application of Hall's matching theorem.

\begin{coro}\label{Gcoro1}
Let $A$ be a finite set with subsets $A_1,\dots, A_n$, and let $d_1,\dots,d_n\in \mathbb{N}$. Then there are disjoint subsets $D_k\subseteq A_k$, with $|D_k|=d_k$ for all $k\in [n]$, if and only if 

\[\left|\bigcup_{i\in I} A_i\right| \ge \sum_{i\in I} d_i\]

for all $I\subseteq [n]$.
\end{coro}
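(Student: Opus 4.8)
The plan is to follow the hint given in \cite{tbook} and reduce the statement to ordinary Hall's matching theorem by a cloning construction. First I would dispatch the easy implication: if the disjoint subsets $D_k\subseteq A_k$ exist, then for every $I\subseteq[n]$ the sets $D_i$, $i\in I$, are pairwise disjoint subsets of $\bigcup_{i\in I}A_i$, so $\left|\bigcup_{i\in I}A_i\right|\ge\sum_{i\in I}|D_i|=\sum_{i\in I}d_i$, which is the claimed inequality.

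For the converse I would introduce an auxiliary bipartite graph $G$ with vertex classes $X$ and $Y$. Set $Y=A$, and let $X=\{(k,j):k\in[n],\ 1\le j\le d_k\}$, so that $X$ contains exactly $d_k$ ``clones'' of each index $k$. Join $(k,j)\in X$ to $a\in Y$ exactly when $a\in A_k$; thus the neighbourhood in $G$ of every clone of $k$ is precisely $A_k$, and $|X|=\sum_{k=1}^n d_k$.

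The key step is to check that $G$ satisfies Hall's condition. Given $S\subseteq X$, let $I=\{k\in[n]:(k,j)\in S\text{ for some }j\}$ be the set of indices appearing in $S$. Then the neighbourhood of $S$ in $G$ is $\bigcup_{k\in I}A_k$, while $|S|\le\sum_{k\in I}d_k$ since $S$ uses at most $d_k$ clones of each $k\in I$; combining this with the hypothesis $\sum_{k\in I}d_k\le\left|\bigcup_{k\in I}A_k\right|$ gives $|S|\le|N_G(S)|$. By Hall's matching theorem, $G$ therefore has a matching $M$ saturating $X$.

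To finish, let $D_k$ be the set of all $a\in A$ that $M$ matches to some clone of $k$. Then $D_k\subseteq A_k$ because $M$ only uses edges running into $A_k$; $|D_k|=d_k$ because all $d_k$ clones of $k$ are saturated by $M$ and matched to distinct vertices of $Y$; and the $D_k$ are pairwise disjoint because each $a\in Y$ meets at most one edge of $M$. I do not expect a real obstacle in this argument: essentially all the content lies in setting up the cloned graph correctly, and the only point that requires a little care is the final bookkeeping step, namely verifying that ``matched to a clone of $k$'' yields pairwise disjoint sets of exactly the prescribed sizes, which is just the defining property of a matching.
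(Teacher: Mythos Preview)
Your proof is correct and is precisely the ``clever application of Hall's matching theorem'' the paper alludes to; the paper itself does not spell out the argument but only cites it as Exercise~2.9 in \cite{tbook}, and the cloning construction you give is the intended solution to that exercise.
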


The above claim is a generalization of Hall's matching theorem because if we set $d_i=1$ for all $i\in[n]$, then it would be an equivalent statement. If we consider the case where each point has the same weight of $\xi>0$, then we obtain the following corollary. 

\begin{coro}\label{Gcoro2}
Let $A$ be a finite set with subsets $A_1,\dots, A_n$ and let $d_1,\dots,d_n\in \mathbb{N}$. Given a real $\xi>0,$ let $(A,2^A,\eta)$ be a discrete measure defined as $\eta(X)=\xi|X|$ for all $X\in 2^A$.

Then there are disjoint subsets $D_k\subseteq A_k$, with $\eta(D_k)=\xi d_k$ for all $k\in [n]$, if and only if 

\[\eta\left(\bigcup_{i\in I} A_i\right) =\xi\left|\bigcup_{i\in I} A_i\right|\ge \xi\sum_{i\in I} d_i\]

for all $I\subseteq [n]$.
\end{coro}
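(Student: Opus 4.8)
The plan is to recognize that Corollary~\ref{Gcoro2} is simply Corollary~\ref{Gcoro1} read through the dictionary supplied by the constant $\xi$. First I would verify that $\eta$, defined by $\eta(X) = \xi|X|$, is a legitimate finite measure on $(A, 2^A)$: it is nonnegative, $\eta(\emptyset) = 0$, and because $A$ is finite any pairwise disjoint family in $2^A$ has only finitely many nonempty members, so countable additivity reduces to the obvious identity $\xi|X_1 \cup X_2 \cup \cdots| = \xi(|X_1| + |X_2| + \cdots)$; moreover $\eta(A) = \xi|A| < \infty$. (This $\eta$ is the scaled counting measure, and it is of course purely atomic, which is why we still need the non-atomic machinery later — but for this corollary that plays no role.)

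Next, I would simply cancel $\xi$. Since $\xi > 0$, for any $D_k \subseteq A_k$ the equation $\eta(D_k) = \xi d_k$ holds if and only if $|D_k| = d_k$, and for any $I \subseteq [n]$ the inequality $\eta\!\left(\bigcup_{i\in I} A_i\right) \ge \xi\sum_{i\in I} d_i$ holds if and only if $\left|\bigcup_{i\in I} A_i\right| \ge \sum_{i\in I} d_i$. Therefore the statement ``there exist pairwise disjoint $D_k \subseteq A_k$ with $\eta(D_k) = \xi d_k$ for all $k \in [n]$'' is equivalent, clause by clause, to ``there exist pairwise disjoint $D_k \subseteq A_k$ with $|D_k| = d_k$ for all $k \in [n]$,'' and the two Hall-type conditions are likewise equivalent. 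Invoking Corollary~\ref{Gcoro1} then gives the result immediately; the displayed equality $\eta(\bigcup_{i\in I}A_i) = \xi|\bigcup_{i\in I}A_i|$ in the statement is just the definition of $\eta$ and requires nothing.

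I do not expect any real obstacle here: the mathematical content is entirely contained in Corollary~\ref{Gcoro1}, and this corollary is only the rescaling that will let us later replace the counting measure by an arbitrary finite non-atomic measure in the proof of Theorem~\ref{conthall}. The one hypothesis worth flagging is $d_1,\dots,d_n \in \mathbb{N}$: it guarantees that $\xi d_k$ is a value actually attained by $\eta$ on subsets of $A_k$ (namely on any $d_k$-element subset), and it is inherited verbatim from Corollary~\ref{Gcoro1}.
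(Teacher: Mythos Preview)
Your proposal is correct and matches the paper's treatment: the paper does not give a formal proof of Corollary~\ref{Gcoro2} at all, merely remarking before and after the statement that it is Corollary~\ref{Gcoro1} with each point given weight~$\xi$. Your argument is exactly this rescaling, written out with the obvious cancellation of the positive constant~$\xi$, so there is nothing to add.
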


If $\xi=1$, then will obtain Corollary \ref{Gcoro1}. So, the above Corollary \ref{Gcoro2} is a generalization of Corollary \ref{Gcoro1}.

If we replace weighted discrete points with disjoint subsets with the same measure and apply that to Corollary \ref{Gcoro2}, we obtain the next corollary.

\begin{coro}\label{Gcoro3}
Given a real $\xi>0$, let $(\Omega, S, \nu)$ be a finite measure and $A=\{A_1,A_2,\dots, A_\ell\}$ be a finite collection of disjoint measurable sets in $S$ where $\nu(A_i)=\xi$. Also let $d_1,\dots,d_n\in \mathbb{N}$ and $\alpha_1,\dots,\alpha_n\subseteq A$. 

Then there are disjoint subsets $D_k\subseteq \alpha_k$, with $\nu\left(\bigcup_{D\in D_k}D\right)=\xi d_k$ for all $k\in [n]$, if and only if 

\[\nu\left(\bigcup_{i\in I} \left(\bigcup_{B\in \alpha_i}B\right)\right) =\xi\left|\bigcup_{i\in I} \alpha_i\right|\ge \xi\sum_{i\in I} d_i\]

for all $I\subseteq [n]$.
\end{coro}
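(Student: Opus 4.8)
The plan is to observe that Corollary~\ref{Gcoro3} is simply Corollary~\ref{Gcoro2} transported along the correspondence that identifies a subcollection of $A$ with the union of its members. The single computation that makes this work is the following: since $A_1,\dots,A_\ell$ are pairwise disjoint and each has measure $\xi$, finite additivity of $\nu$ gives, for every subcollection $\beta\subseteq A$,
\[
\nu\left(\bigcup_{B\in\beta}B\right)=\sum_{B\in\beta}\nu(B)=\xi\,|\beta|.
\]
Thus the $\nu$-measure of the union of any subcollection is $\xi$ times its cardinality, which is exactly the value the discrete measure $\eta$ of Corollary~\ref{Gcoro2} assigns to that subcollection when it is viewed as a subset of the finite ground set $A=\{A_1,\dots,A_\ell\}$.

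First I would apply Corollary~\ref{Gcoro2} with the finite set taken to be $A=\{A_1,\dots,A_\ell\}$, the same constant $\xi$, the subsets $\alpha_1,\dots,\alpha_n\subseteq A$, and the integers $d_1,\dots,d_n$. It yields pairwise disjoint subcollections $D_k\subseteq\alpha_k$ with $\eta(D_k)=\xi|D_k|=\xi d_k$ if and only if $\eta\left(\bigcup_{i\in I}\alpha_i\right)=\xi\left|\bigcup_{i\in I}\alpha_i\right|\ge\xi\sum_{i\in I}d_i$ for all $I\subseteq[n]$. Next I would rewrite both sides of this equivalence in terms of $\nu$ using the displayed identity. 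Because $\bigcup_{i\in I}\left(\bigcup_{B\in\alpha_i}B\right)=\bigcup_{B\in\bigcup_{i\in I}\alpha_i}B$, its $\nu$-measure equals $\xi\left|\bigcup_{i\in I}\alpha_i\right|$, so the Hall-type inequality stated here is literally the hypothesis of Corollary~\ref{Gcoro2}; and for the subcollections $D_k$ produced, $\nu\left(\bigcup_{D\in D_k}D\right)=\xi|D_k|=\xi d_k$, which is the required conclusion. The reverse implication is the same chain of equalities read backwards: disjoint $D_k\subseteq\alpha_k$ with $\nu\left(\bigcup_{D\in D_k}D\right)=\xi d_k$ must have $|D_k|=d_k$, so Corollary~\ref{Gcoro2} forces the inequality.

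I do not expect a genuine obstacle in this argument; the only points requiring care are bookkeeping ones. One must check that ``disjoint'' for the families $D_1,\dots,D_n$ is interpreted the same way in both corollaries (pairwise disjoint as subsets of $A$), and that passing from $\nu$ on all of $S$ to the behaviour of $\nu$ on finite unions of the $A_i$ loses nothing — both of which follow at once from the pairwise disjointness of $A_1,\dots,A_\ell$ together with finite additivity. The mathematical content is therefore entirely inherited from Corollary~\ref{Gcoro2}, and ultimately from Hall's matching theorem.
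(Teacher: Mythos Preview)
Your proposal is correct and follows essentially the same route as the paper: apply Corollary~\ref{Gcoro2} to the finite set $A=\{A_1,\dots,A_\ell\}$ with the discrete measure $\eta(X)=\xi|X|$, and then use disjointness and finite additivity to identify $\eta$ on subcollections with $\nu$ on the corresponding unions. The paper's proof spells out the chain $\eta\bigl(\bigcup_{i\in I}\alpha_i\bigr)=\nu\bigl(\bigcup_{i\in I}\bigcup_{B\in\alpha_i}B\bigr)$ explicitly, while you phrase it via the single identity $\nu\bigl(\bigcup_{B\in\beta}B\bigr)=\xi|\beta|$; these are the same computation.
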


\begin{proof}
Let $(A,2^A,\eta)$ be a discrete measure defined as $\eta(X)=\xi|X|$ for all $X\in 2^A.$ Then by Corollary \ref{Gcoro2}, there are disjoint subsets $D_k \subseteq \alpha_k$, with $\eta(D_k)=\xi d_k$ for all $k\in [n]$, if and only if 
\[\eta\left(\bigcup_{i\in I}\alpha_i\right)=\xi \left|\bigcup_{i\in I}\alpha_i\right|\ge \xi\sum_{i\in I} d_i\]
for all $I\subseteq [n]$. 

Since $\bigcup_{i\in I}\alpha_i$ is a finite collection of disjoint measurable sets, then 
\begin{align*}
    \eta\left(\bigcup_{i\in I}\alpha_i\right)&=\eta\left(\bigcup_{B\in\bigcup_{i\in I}\alpha_i}\{B\}\right)\\
    &=\sum_{B\in\bigcup_{i\in I}\alpha_i}\eta(\{B\})\\
    &=\sum_{B\in\bigcup_{i\in I}\alpha_i}\nu(B)\\
    &=\nu\left(\bigcup_{B\in\bigcup_{i\in I}\alpha_i} B\right)\\
    &=\nu\left(\bigcup_{i\in I} \left(\bigcup_{B\in \alpha_i}B\right)\right).
\end{align*}

\end{proof}

It also good to mention that Corollary \ref{Gcoro3} is a generalization of Corollary \ref{Gcoro2}. As \ref{Gcoro3} deals with disjoint sets with measure, while \ref{Gcoro2} exclusively deals with the case where each set is a single point.

\section{Main Result}

Now to prove Theorem \ref{conthall}.

\begin{proof}

If such $B_k$'s exist, then for all $I$, $\nu\left(\bigcup_{i\in I} A_i\right)\ge \sum_{i\in I} m_i.$ 

Conversely, let $Q$ be a nonempty subset of $[n]$, then we can define

\[S_Q=\left(\bigcap_{i\in Q} A_i \right)\setminus \left(\bigcap_{i\not\in  Q} A_i\right).\]

Note that if $P$ is also a nonempty subset of $[n]$ and $Q\neq P$, then $S_Q\cap S_P=\varnothing$.
It would also be helpful to note that 

\[A_k=\bigcup_{Q\in\{ P\in 2^{[n]}| k\in P\}} S_Q\]

and 

\[\bigcup_{i\in [n]} A_i=\bigcup_{Q\in 2^{[n]}\setminus \{\varnothing\}} S_Q.\]

Assume $\xi>0.$ Then by Corollary \ref{mcoro1}, for each nonempty $Q\subseteq[n]$ there exists a subset of $S_Q$ of measure $\xi\floor*{\frac{\nu(S_Q)}{\xi}};$ let this subset be denoted as $E_{Q,\xi}$. Then by Corollary \ref{mcoro2}, $E_{Q,\xi}$ can be partitioned into $\floor*{\frac{\nu(S_Q)}{\xi}}$ subsets of measure $\xi$. We can now define a set

\[A_{k,\xi}=\bigcup_{Q\in\{ P\in 2^{[n]}| k\in P\}} E_{Q,\xi}.\]

Now consider the number $\floor*{\frac{m_k}{\xi}}$, we are given $m_k$ by the theorem statement, which by Corollaries \ref{mcoro1} and \ref{mcoro2} is the maximum number of disjoint subsets of measure $\xi$ for a set of measure $m_k$ and define $d_{k,\xi}=\floor*{\frac{m_k}{\xi}}-2^{n+1}$. We want $d_{k,\xi}$ to be positive integers, so note that if $0<\xi\le\frac{m_k}{2^{n+1}+1}$, then $d_{k,\xi}>0$.

Observe that given a nonempty $Q\subseteq [n]$ we have that
\begin{align*}
    \left|\nu\left(\bigcup_{i\in Q}A_i\right)-\nu\left(\bigcup_{i\in Q}A_{i,\xi}\right)\right| &=\left|\nu\left(\bigcup_{P\in \{T\in 2^{[n]}| T\cap Q\neq \varnothing\}} S_P\right)-\nu\left(\bigcup_{P\in\{T\in 2^{[n]}| T\cap Q\neq \varnothing\}} E_{P,\xi}\right)\right|\\
    &=\left|\sum_{P\in\{T\in 2^{[n]}| T\cap Q\neq \varnothing\}} \nu(S_P) -\sum_{P\in\{T\in 2^{[n]}| T\cap Q\neq \varnothing\}} \nu\left( E_{P,\xi}\right)\right|\\
    &= \left|\sum_{P\in\{T\in 2^{[n]}| T\cap Q\neq \varnothing\}} \left[\nu(S_P)- \nu\left( E_{P,\xi}\right)\right] \right|\\
    &<\left|\sum_{P\in\{T\in 2^{[n]}| T\cap Q\neq \varnothing\}} \xi \right|=\xi (2^n-2^{n-|Q|}).
\end{align*}

Thus as $\xi \to 0$, $\nu\left(\bigcup_{i\in Q}A_{i,\xi}\right)\to \nu\left(\bigcup_{i\in Q}A_i\right)$.

Also note that 

\[\nu\left(\bigcup_{i\in Q}A_i\right)\ge\nu\left(\bigcup_{i\in Q}A_{i,\xi}\right).\]

We use $d_{k,\xi}$ because if we are given a nonempty $Q\subseteq [n]$ then by the work we did previously, $\nu\left(\bigcup_{i\in Q} A_{i,\xi}\right)> \nu\left(\bigcup_{i\in Q} A_{i}\right)-\xi(2^n-2^{n-|Q|})$. 

Since the theorem hypothesis states that $\nu\left(\bigcup_{i\in Q} A_i\right)\ge \sum_{i\in Q} m_i$, then 

\begin{align*}
    \nu\left(\bigcup_{i\in Q} A_i\right)-\xi(2^n-2^{n-|Q|})&\ge \left(\sum_{i\in Q} m_i\right)-\xi(2^n-2^{n-|Q|})\\
    &\ge \xi\left(\sum_{i\in Q} \floor*{\frac{m_i}{\xi}}\right)-\xi(2^n-2^{n-|Q|})\\
    &\ge\xi\left(\sum_{i\in Q} \floor*{\frac{m_i}{\xi}}\right)-2^{n+1}\xi\\
    &\ge\xi\sum_{i\in Q} \left(\floor*{\frac{m_i}{\xi}}-2^{n+1}\right)=\xi\sum_{i\in Q} d_{i, \xi}.
\end{align*}

In summary, for sufficently small $\xi$,

\[\nu\left(\bigcup_{i\in Q} A_{i,\xi}\right)>\nu\left(\bigcup_{i\in Q} A_i\right)-\xi(2^n-2^{n-|Q|})\ge \xi\sum_{i\in Q} d_{i, \xi}>0. \]

 Therefore for all $I\subseteq [n]$ and a sufficiently small $\xi$ we have

\[\nu\left(\bigcup_{i\in I}A_{i,\xi}\right)> \xi \sum_{i\in I} d_{i,\xi}>0.\]

 Therefore by Corollary \ref{Gcoro3}, for sufficiently small $\xi$ there exists disjoint subsets $B_{k,\xi}\subseteq A_{k,\xi}$ such that $\nu(B_{k,\xi})=\xi d_{k,\xi}$ for all $k\in[n].$

 Given a nonempty $Q\subseteq [n]$ we have that $\sum_{i\in Q}m_i\ge\xi\sum_{i\in Q} d_{i,\xi}$. Therefore

\begin{align*}
    \left|\sum_{i\in Q}m_i-\xi\sum_{i\in Q} d_{i,\xi}\right| &=\left|\sum_{i\in Q}(m_i- \xi d_{i,\xi})\right| \\
    &=\left|\sum_{i\in Q}\left(m_i- \xi\floor*{\frac{m_i}{\xi}}+\xi2^{n+1} \right)\right|\\
    &=2^{n+1}\xi|Q|+\left|\sum_{i\in Q}\left(m_i- \xi\floor*{\frac{m_i}{\xi}} \right)\right|.
\end{align*}

Keep in mind that $ \lim_{\xi \to 0} m_i-|\xi|\le \lim_{\xi \to 0} \xi\floor*{\frac{m_i}{\xi}}\le\lim_{\xi \to 0} m_i+|\xi|.$ Therefore $\lim_{\xi \to 0} \xi\floor*{\frac{m_i}{\xi}}=m_i.$

Thus as $\xi\to 0$ we have that $\nu(B_{k,\xi})=\xi d_{k,\xi}\to m_k$ for all $k\in [n]$ and

\[\nu\left(\bigcup_{i\in I}A_{i,\xi}\right)> \xi \sum_{i\in I} d_{i,\xi}\longrightarrow \nu\left(\bigcup_{i\in I} A_i\right)\ge \sum_{i\in I} m_i\]

for all $I\subseteq [n].$
 
Given a sufficiently  small $\xi>0$, let us define $\xi_i=\frac{\xi}{2^i}$ for $i\in \mathbb{N} \cup\{0\}.$ So for each $k\in [n]$ we can now create a sequence $\{\nu(B_{k,\xi_i})\}_{i=0}^\infty.$ Since this sequence is monotonically increasing, by our previous work and Corollary \ref{mcoro1} for every $k\in [n]$ there exists a sequence of sets 

\[B_{k,\xi_0}\subseteq B_{k,\xi_1}\subseteq B_{k,\xi_2}\subseteq \cdots\] 

such that for every $i\in \mathbb{N} \cup\{0\}$: $B_{k,\xi_i} \subseteq A_k$, $\nu(B_{k,\xi_i})=\xi_id_{k,\xi_i}$, and $B_{k,\xi_i} \cap B_{k',\xi_i}=\varnothing$ when $k\neq k'.$
Let us define 

\[B_{k,0}=\bigcup_{i=0}^\infty B_{k,\xi_i}.\]

By our previous work, we know that $m_k=\nu\left(B_{k,0}\right).$ We now need to prove two more things: 

\begin{enumerate}
    \item there exists a set $B_k \subseteq B_{k,0}$ such that $B_k\subseteq A_k$ and $\nu(B_k)=\nu(B_{k,0})$
    \item there exists sets $B_{k_1}\subseteq B_{k_1,0}$ and $B_{k_2}\subseteq B_{k_2,0}$ where $B_{k_1}\cap B_{k_2}=\varnothing$, $\nu(B_{k_1})=\nu(B_{k_1,0})$ and $\nu(B_{k_2})=\nu(B_{k_2,0})$ when $k_1\neq k_2.$
\end{enumerate}

If $B_{k,0}$ is not a subset of $A_k$ then $B_{k,0}\cap A_k^c \neq \varnothing.$ If $\nu(B_{k,0}\cap A_k^c)>0$, then there exits an $\alpha \in \mathbb{N} \cup\{0\}$ such that $\nu(B_{k,\xi_\alpha}\cap A_k^c)>0$, which contradicts $B_{k,\xi_\alpha}\subseteq A_k.$ Therefore $\nu(B_{k,0}\cap A_k^c)=0$. Thus there exists a set of measure zero $N_k$ such that $\nu(B_{k,0}\setminus N_k)=m_k$ and $B_{k,0}\setminus N_k \subseteq A_k$.

Similarly if we are given distinct $k_1,k_2 \in [n]$ and $B_{k_1,0}\cap B_{k_2,0} \neq \varnothing$, then if $\nu(B_{k_1,0}\cap B_{k_2,0})>0$, then there exits an $\alpha \in \mathbb{N} \cup\{0\}$ such that $\nu(B_{k_1,\xi_\alpha}\cap B_{k_2,\xi_\alpha})>0$. This contradicts $B_{k_1,\xi_\alpha}\cap B_{k_2,\xi_\alpha}=\varnothing,$ thus $\nu(B_{k_1,0}\cap B_{k_2,0})=0.$ Thus there exists sets of measure zero $E_{k_1}$ and $E_{k_2}$ such that $(B_{k,0}\setminus E_{k_1})\cap(B_{k',0}\setminus E_{k_2})= \varnothing$ if $k_1\neq k_2.$

So there exists disjoint sets $B_k \subseteq A_k$ such that $\nu(B_k)=m_k$ for all $k\in [n]$. Our proof is complete.

\end{proof}

\section*{Acknowledgements}

The author conjectured the main result before knowing that it had a connection with graph theory. He would like to thank Professor Hao Huang for pointing out its combinatorial connection and Professor Shanshuang Yang for his proofreading and verification.

\bibliography{mybibfile}

\end{document}